\numberwithin{equation}{section}
\theoremstyle{definition}
\newtheorem{definition}{Definition}[section]
\theoremstyle{remark}
 \theoremstyle{plain}
\newtheorem{theorem}[definition]{Theorem}
\newtheorem{result}[definition]{Result}
\newtheorem{lemma}[definition]{Lemma}
\newtheorem{proposition}[definition]{Proposition}
\newcommand\cvx[1]{{\sf Conv({#1})}}
\newcommand\nwtp[1]{{\sf N({#1})}}
\newcommand\supp[1]{{\sf supp({#1})}}
\newcommand{\maf}[1]{\mathfrak{#1}}
\newcommand{\C}{\mathbb{C}} 
\newcommand{\R}{\mathbb{R}}
\newcommand*{\rom}[1]{\expandafter\@slowromancap\romannumeral #1@}
\begin{document}

\title[Irreducible polynomial and Newton polytope]{Irreducibility of a sum of polynomials depending on disjoint sets of variables}

\author{Vikramjeet Singh Chandel}
\address{Harish-Chandra Research Institute, Prayagraj (Allahabad) 211019, India}
\email{vikramjeetchandel@hri.res.in}

\author{Uma Dayal}
\address{}
\email{umadayal@stanford.edu}

\thanks{Vikramjeet Singh Chandel was supported by an institute postdoctoral fellowship of IIT 
Bombay while working on this project}

\keywords{multivariable polynomial, irreducibility, convex hull, newton polytope, hyperplane}
\subjclass[2010]{Primary: 52B20; Secondary: 13P05}

\begin{abstract}
In this article, we give two different sufficient conditions for the irreducibility of a polynomial of more
than one variable, over an algebraically closed field, that can be written as a sum of two polynomials which depend on 
mutually disjoint sets of variables. These conditions are derived from analyzing the
Newton polytope of such a polynomial and then applying the `Irreducibility criterion' introduced 
by Gao.
\end{abstract}
\maketitle

\section{Introduction and statement of results}\label{S:intro}
In this article, we are interested in finding sufficient conditions that will guarantee
that a polynomial in several variables whose coefficients lie in an algebraically closed field $F$
is irreducible over the field $F$.
The method that we employ in our investigation is called the `Polytope Method' and is strongly 
motivated from the work of Gao
\cite{Gao01:AIrPolyNwtPol}.
Before we outline this method, let us denote by $\R$
the set of all real numbers.
\smallskip

Given a positive integer $n\geq 1$, and a nonempty subset $A\subset\R^n$, the {\em convex hull} of 
$A$, denoted by $\cvx{A}$, is the set defined by:
\[\cvx{A}:=\Big\{\sum_{1}^{m}t_j\,a_j:t_j\in[0,1] \ \text{with} \ \sum_{1}^{m}t_j=1 \ \text{and} \ 
a_j\in A, \ 1\leq j\leq m\Big\}.\]
A {\em polytope} in $\R^n$ is (by definition) the convex hull of finitely many points in $\R^n$. A point of a polytope is called a {\em vertex} if it does not lie
in the interior of the convex hull of two distinct points of the polytope. It is
a well known fact that a polytope is always the convex hull of its vertices.
We refer the reader to either one of the following: \cite{Ewald:CCAAG},
\cite{Grunbaum:CP}, \cite{Schneider:CB}, \cite{Ziegler:LP} for further basic
properties of polytope.
\smallskip

There is a very natural way of associating a polytope to a given polynomial.
In this article, we shall always consider polynomials in the variables $z_1,\ldots,z_n$ with
coefficients in an arbitrary but fixed algebraically closed field $F$. The set of all such
polynomials will be denoted by $F[z_1,\ldots,z_n]$. Given a polynomial 
$Q=\sum q_{i_1,\ldots,i_n}{z_1}^{i_1}\dots{z_n}^{i_n}$ belonging to $F[z_1,\ldots,z_n]$, the support of $Q$ denoted by $\supp{Q}$, is the set defined by
\[\supp{Q}:=\Big\{(i_1,\ldots,i_n):q_{i_1,\ldots,i_n}\neq 0,\,i_j\geq0\Big\}.\]
Here, if we write $I=(i_1,\ldots,i_n)$ then denote $|I|=\sum_{j=1}^ni_j$
and $z^{I}=z_1^{i_1}\dots z_n^{i_n}$.

\noindent
The {\em Newton polytope} of $Q$, denoted by $\nwtp{Q}$, is (by definition) the convex hull
of $\supp{Q}$. This is the association that we alluded to in the beginning of this paragraph.
\smallskip

The term `Polytope Method', as coined in the article \cite{Gao01:AIrPolyNwtPol},
has its origin in a paper by Ostrowski \cite{Ostrowski75:MFP}.
Ostrowski in \cite{Ostrowski75:MFP}
uses the term `Baric Polyhedron' in place of Newton polytope for a class of more general
polynomials called {\em algebraic polynomials} where the exponents of variables
are rational numbers. We also wish to refer the reader to the work done in the articles
\cite{Filaseta95:IBP}, \cite{Lipkovski88: NPhI} and \cite{Shanok36:CP&cI}
where methods based on studying the polytopes have been employed in determining the
irreducibility of polynomials.
\smallskip

We begin with a key result in the paper 
\cite[Theorem~\rom{6}]{Ostrowski75:MFP} for polynomials that is at the heart of
the `Polytope Method'. For this purpose, we need the notion of Minkowski
sum of convex sets.
\smallskip

\noindent Given convex sets $A$ and $B$ in $\R^n$, the Minkowski sum of $A$ and $B$, denoted
by $A+B$, is (by definition) the set $\big\{a+b: a\in A, \ b\in B\big\}$.
We now state:

\begin{result}[Ostrowski, \cite{Ostrowski75:MFP}]\label{Res:factpolyminksum}
Let $Q\in F[z_1,\ldots,z_n]$ be such that $Q=Q_1Q_2$ for some $Q_1,Q_2\in F[z_1,\ldots,
z_n]$. Then $\nwtp{Q}=\nwtp{Q_1} + \nwtp{Q_2}$.
\end{result}
\noindent
Based on Result~\ref{Res:factpolyminksum}, Gao in \cite{Gao01:AIrPolyNwtPol} gave an irreducibility criterion
by introducing the notion of an integrally indecomposable polytope. We shall state this criterion here but we need a few 
definitions.
\smallskip

A point in $\R^n$ will be called an {\em integral point} if all of its 
coordinates are integers. 
A polytope will be called an {\em integral polytope} if all of its vertices are integral points. Further,
an integral polytope $C$ is called {\em integrally decomposable} if there exist integral polytopes
$A$ and $B$, consisting of at least two points each, such that $C=A+B$. A polytope that is not 
integrally decomposable is called an {\em integrally indecomposable} polytope. Now we present 
the `Irreducibility criterion' due to Gao.
\smallskip

\noindent{\bf Irreducibility criterion.}
Let $Q\in F[z_1,\ldots,z_n]$ be a nonconstant polynomial that is not divisible by any of 
$z_i$. If the Newton polytope $\nwtp{Q}$ of $Q$ is integrally indecomposable then $Q$ is
irreducible over $F$.
\smallskip

Notice that the irreducibility criterion above follows very easily from 
Result~\ref{Res:factpolyminksum}.
Based on this criterion and by constructing integrally indecomposable polytopes, Gao in 
\cite{Gao01:AIrPolyNwtPol} gave new classes of irreducible polynomials. We present an important
result in \cite{Gao01:AIrPolyNwtPol} that characterizes 
integrally indecomposable prisms but first we define what a prism is. A {\em prism} is the convex hull of a set in 
$\R^n$ consisting of a polytope $C$, contained in a hyperplane $H$, and a point $v\in\R^n$ not belonging to the hyperplane $H$.
The point $v$ is called the {\em distinguished vertex} of the prism. Now we present

\begin{result}[{\cite[Theorem~4.2]{Gao01:AIrPolyNwtPol}}]\label{Res:indprism}
	Let $C$ be an integral polytope in $\R^n$ contained in some hyperplane 
	$H$ and let $v\in\R^n$ be an integral point that is not in $H$. Suppose that
	$v_1,\ldots,v_k$ are all the vertices of $C$. Let $\widetilde{C}=\{v\}\cup
	C$. Then the prism
	$\cvx{\widetilde{C}}$ is integrally indecomposable if and only if 
	\[\gcd(v-v_1,\ldots,v-v_k)=1.\]
\end{result}
\noindent Here, and elsewhere in this article, for an integral point $w$, $\gcd(w)$ shall denote the greatest common divisor of
the coordinates of $w$. For a finite set of integral points $w_1,\ldots,w_k$,
$\gcd(w_1,\ldots,w_k)$ will denote the greatest common divisor of the coordinates of $w_i$'s
taken together. 
\smallskip

Using Result~\ref{Res:indprism}, Gao constructed many classes of irreducible 
polynomials that were not known before. A particular class of polynomials for
which Gao gave a sufficient condition for irreducibility is the following.
Let $Q\in F[z_1,\ldots,z_n],n\geq 2$, be such that $Q(z)=Q_1(z_1)+Q_2(z_2,\ldots,z_n)$ where
$Q_1$ and $Q_2$ are nonconstant polynomials. Then $Q$ is irreducible if $\gcd(d(Q_1),\;d(Q_2))=1$, where
$d(Q_j),j=1,2$, denotes the degree of $Q_j$. This result motivates the following problem:
\begin{itemize}
	\item[$(*)$] Let $n\geq 2$, and consider a polynomial $P\in F[z_1,\dots,z_n], n\geq 2$, such that
	 $P(z_1,\dots,z_n)=P_1(z_1,\dots,z_{\nu})+P_2(z_{\nu+1},\dots,z_n)$, where $P_1$, $P_2$ are nonconstant polynomials
	and $1\leq\nu\leq n-1$. Investigate sufficient conditions under which $P$ is irreducible
	over $F$.
\end{itemize}
In the case $\nu=1$, one such condition is given by the above stated result of Gao.
By investigating the Newton polytope of a polynomial as stated in 
$(*)$, we find out that there is a special integral prism, as in Result~\ref{Res:indprism},
that is a {\em face} of the Newton polytope of 
such a polynomial. This fact enabled us to derive our first criterion which we
state here.
\begin{theorem}\label{T:irredCritdecpoly}
Let $P\in F[z_1,\ldots,z_n]$, $n\geq 2$, and $\nu$ be such that
$1\leq \nu \leq n-1$. Suppose we can write $P=P_1 + P_2$ where
$P_1\in F[z_1,\ldots,z_\nu], P_2\in F[z_{\nu+1},\ldots,z_n]$ are nonconstant polynomials.
Let $d(P_j)$ denote the degree of polynomials $P_j$, $j=1,2$.
Suppose that $\gcd(d(P_1),d(P_2))=1$, then $P$ is irreducible over $F$.
\end{theorem}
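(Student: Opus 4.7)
The plan is to invoke the Irreducibility criterion on $P$, which reduces the problem to verifying (i) $P$ is not divisible by any $z_i$, and (ii) $\nwtp{P}$ is integrally indecomposable. For (i), if $z_i \mid P$ with $i \le \nu$, then setting $z_i = 0$ gives $P_1(\ldots,0,\ldots) = -P_2$, and since the two sides depend on disjoint sets of variables both must be constant, contradicting the nonconstancy of $P_2$; the case $i > \nu$ is symmetric.

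For (ii), I will suppose for contradiction that $\nwtp{P} = A + B$ with $A, B$ integral polytopes each having at least two points. After translating each by a suitable integer vector I may assume $A, B \subseteq \R^n_{\ge 0}$ (the hyperplanes $\{x_i=0\}$ are all touched by $\nwtp{P}$, and integer translations work because $A, B$ are integral). Introduce the linear functionals $L_1(x) = x_1 + \cdots + x_\nu$ and $L_2(x) = x_{\nu+1} + \cdots + x_n$, and set $a_1 = \max_A L_1$, $a_2 = \max_B L_1$, $b_1 = \max_A L_2$, $b_2 = \max_B L_2$. Minkowski-additivity of the max of a linear functional gives $a_1 + a_2 = d_1$ and $b_1 + b_2 = d_2$. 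The key structural inequality is
\[
\frac{L_1(x)}{d_1} + \frac{L_2(x)}{d_2} \le 1 \qquad (x \in \nwtp{P}),
\]
which holds on $\supp{P}$ because each monomial sits in one of the two coordinate subspaces (so one summand vanishes and the other is at most $1$) and extends to $\nwtp{P}$ by convexity.

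Next, pick $\alpha \in \supp{P_1}$ with $|\alpha| = d_1$ and $\beta \in \supp{P_2}$ with $|\beta| = d_2$, set $U^{(1)} = (\alpha, 0)$, $U^{(2)} = (0, \beta)$, and decompose $U^{(1)} = u_A^{(1)} + u_B^{(1)}$ and $U^{(2)} = u_A^{(2)} + u_B^{(2)}$ with $u_A^{(i)} \in A$, $u_B^{(i)} \in B$. Nonnegativity forces $u_A^{(1)}, u_B^{(1)}$ to have zero last $n-\nu$ coordinates and $u_A^{(2)}, u_B^{(2)}$ to have zero first $\nu$ coordinates; combining $L_1(u_A^{(1)}) + L_1(u_B^{(1)}) = d_1 = a_1 + a_2$ with the individual bounds $L_1(u_A^{(1)}) \le a_1$, $L_1(u_B^{(1)}) \le a_2$ pins down $L_1(u_A^{(1)}) = a_1$, $L_1(u_B^{(1)}) = a_2$, and analogously for $L_2$ on the $u^{(2)}$'s. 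The crossed sums $u_A^{(1)} + u_B^{(2)}$ and $u_B^{(1)} + u_A^{(2)}$ lie in $A + B = \nwtp{P}$, so applying the displayed inequality yields $a_1/d_1 + b_2/d_2 \le 1$ and $a_2/d_1 + b_1/d_2 \le 1$; summing these forces equality in both. Thus $a_1 d_2 + b_2 d_1 = d_1 d_2$, and $\gcd(d_1, d_2) = 1$ forces $d_2 \mid b_2$, giving $b_2 \in \{0, d_2\}$. If $b_2 = 0$ then $a_2 = 0$ as well, so $B \subseteq \{L_1 = L_2 = 0\} \cap \R^n_{\ge 0} = \{0\}$; if $b_2 = d_2$, then symmetrically $A = \{0\}$. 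Either outcome contradicts the assumption that $A$ and $B$ each have at least two points.

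The main obstacle I expect is the careful bookkeeping with the two functionals $L_1, L_2$ and the two polytopes $A, B$ after the $\R^n_{\ge 0}$-normalization, and in particular verifying that the crossed sums $u_A^{(1)} + u_B^{(2)}$ take the predicted values under $L_1$ and $L_2$ so that the key inequality can be applied. Once this setup is in place, the coprimality $\gcd(d_1, d_2) = 1$ produces the contradiction in one line.
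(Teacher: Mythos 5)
Your proof is correct, and it takes a genuinely different route from the paper's. The paper proceeds by face theory: it constructs a chain of faces $C_0\supset C_1\supset\cdots\supset C_\nu$ of $\nwtp{P}$ (via a sequence of lexicographic-type supporting hyperplanes), identifies the prism $C_\nu$ as a face, extracts a single edge joining the apex $X_0$ of the prism to a vertex of $\cvx{\maf{A}_2}$, and then invokes Gao's prism criterion (Result~\ref{Res:indprism}) together with Result~\ref{Res: minkdecface} to conclude $\nwtp{P}$ is integrally indecomposable. You instead attack the hypothetical Minkowski decomposition $\nwtp{P}=A+B$ directly: after normalizing $A,B\subseteq\R^n_{\ge 0}$ by an integral translation, you exploit Minkowski-additivity of $\max$ for the two coordinate-block functionals $L_1,L_2$, the key convexity inequality $L_1/d_1+L_2/d_2\le 1$ on $\nwtp{P}$, and the "crossed sums" $u_A^{(1)}+u_B^{(2)}$, $u_B^{(1)}+u_A^{(2)}$ to pin down $a_1 d_2+b_2 d_1=d_1 d_2$; coprimality then forces one summand to collapse to a point. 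Your approach is more elementary and self-contained (it dispenses with the prism indecomposability criterion, the face-of-face transitivity result, and the face-wise Minkowski decomposition of polytope sums), while the paper's face-theoretic setup has the virtue of simultaneously producing Proposition~\ref{P:anocritirred} and Example~2, and of giving a geometric picture of why the edge $\overline{X_0 X_1}$ is the relevant obstruction. One small remark: when you claim that $\nwtp{P}$ touches every hyperplane $\{x_i=0\}$, this is because $\supp{P_1}$ lies in $\{x_j=0\}$ for $j>\nu$ and $\supp{P_2}$ lies in $\{x_j=0\}$ for $j\le\nu$; it is worth noting explicitly that this makes $\min_{\nwtp{P}}x_i=0$ for every $i$, from which the integrality of $\min_A x_i=-\min_B x_i$ and hence the admissible integral translation follows.
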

Clearly this result generalizes the result of Gao when $\nu=1$.
In Section~\ref{S:auxres}, where we do certain computations regarding the 
determination of $\nwtp{P}$, $P$ as in Theorem~\ref{T:irredCritdecpoly}, 
we shall also notice that, under a mild restriction on $P_j$, $\nwtp{P_j}$ are
faces of $\nwtp{P}$. This observation allows us to present our second criterion:

\begin{proposition}\label{P:anocritirred}
Let $P\in F[z_1,\ldots,z_n]$, $n\geq 2$. Supose we can write $P=P_1 + P_2$ where
$P_1\in F[z_1,\ldots,z_\nu], P_2\in F[z_{\nu+1},\ldots,z_n]$ are nonconstant polynomials.
If for some $i$, $1\leq i\leq 2$, we have $P_i(0)=0$ such that
none of $z_j$'s divide $P_i$ for any $j$ and $\nwtp{P_i}$ is integrally indecomposable.
Then $P$ is irreducible over $F$.
\end{proposition}

The proof of the above proposition is presented in Section~\ref{S:auxres} while
the proof of Theorem~\ref{T:irredCritdecpoly} is presented in Section~\ref{S:proofMT}.

\section{A few auxiliary results and proof of Proposition~\ref{P:anocritirred}}\label{S:auxres}
This section is devoted to the study of the Newton polytope $\nwtp{P}$ of the polynomial
$P$, where $P$ is as in the problem $(*)$. One of the important results of this section 
is a result which says that under the condition that $P_j(0)=0$, $j=1,2$, $\nwtp{P_j}$ are
faces of $\nwtp{P}$. This is Proposition~\ref{P:facesofpoly} below.
We start with recalling the definition of a face of a general convex set $C$.
\smallskip

Let $H$ be a hyperplane in $\R^n$. By its definition there exists a nonzero $\alpha\in\R^n$ and
$\gamma\in\R$ such that $H:=\big\{x\in\R^n:\sum_{j=1}^n\alpha_jx_j=\gamma\big\}$.
The hyperplane divides $\R^n$ into the following two half spaces:
\[
H^{+}:=\Big\{x\in\R^n:\sum_{1}^n\alpha_jx_j-\gamma\geq 0\Big\}, \ \text{and} \ 
H^{-}:=\Big\{x\in\R^n:\sum_{1}^n\alpha_jx_j-\gamma\leq 0\Big\}.
\]
A {\em supporting hyperplane} for a convex set $C$ is a hyperplane $H$ such that 
$H\cap C\neq\emptyset$  and either $C\subseteq H^{+}$ or $C\subseteq H^{-}$.
A {\em face} of $C$ is then a set of the form $C\cap H$, where $H$ is a supporting
hyperplane for $C$.
\smallskip

We begin with an elementary lemma that describes the convex hull of a union of two sets.
This lemma should be there in the literature; we present it here for the sake making the 
article to be more self contained.
\begin{lemma}\label{L:cvxhullunion}
Let $A,B$ be two nonempty finite sets in $\R^n$. Then
\[
\cvx{A\cup B}=\big\{t\alpha+(1-t)\beta : t\in[0,\,1], \ \alpha\in\cvx{A}, \ \beta\in\cvx{B}\big\}.
\] 
\end{lemma}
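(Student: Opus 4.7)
\medskip

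The plan is to prove the equality by two inclusions, writing $S$ for the right-hand side of the displayed formula.

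The inclusion $S\subseteq\cvx{A\cup B}$ is the easy direction. Since $A\subseteq A\cup B$ and $B\subseteq A\cup B$, we have $\cvx{A}\subseteq\cvx{A\cup B}$ and $\cvx{B}\subseteq\cvx{A\cup B}$. Therefore, given $\alpha\in\cvx{A}$, $\beta\in\cvx{B}$ and $t\in[0,1]$, the point $t\alpha+(1-t)\beta$ is a convex combination of two elements of $\cvx{A\cup B}$, and $\cvx{A\cup B}$ is convex, so it lies in $\cvx{A\cup B}$.

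For the reverse inclusion $\cvx{A\cup B}\subseteq S$, I would take an arbitrary $\gamma\in\cvx{A\cup B}$ and write it as $\gamma=\sum_{j=1}^m s_j c_j$ with $s_j\in[0,1]$, $\sum_j s_j=1$, and $c_j\in A\cup B$. To handle the possibility that $A$ and $B$ intersect, I would partition the index set $\{1,\dots,m\}$ into disjoint parts $I_A$ and $I_B$ by declaring $j\in I_A$ whenever $c_j\in A$ and $j\in I_B$ whenever $c_j\in B\setminus A$ (so every summand belongs to exactly one group). Then set $t:=\sum_{j\in I_A}s_j$, so that $1-t=\sum_{j\in I_B}s_j$.

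The main (and only mild) obstacle is treating the degenerate cases $t=0$ and $t=1$, which is why the hypothesis that both $A$ and $B$ are nonempty is used. When $t\in(0,1)$, both sums are strictly positive, so one can define
\[
\alpha:=\frac{1}{t}\sum_{j\in I_A}s_j c_j\in\cvx{A},\qquad \beta:=\frac{1}{1-t}\sum_{j\in I_B}s_j c_j\in\cvx{B},
\]
and obtain $\gamma=t\alpha+(1-t)\beta\in S$. When $t=1$, every summand lies in $A$, so $\gamma\in\cvx{A}$, and I would pick any $\beta\in\cvx{B}$ (possible since $B\neq\emptyset$) and write $\gamma=1\cdot\gamma+0\cdot\beta\in S$. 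The case $t=0$ is symmetric, using nonemptiness of $A$ to select an $\alpha\in\cvx{A}$. This exhausts all cases and establishes $\cvx{A\cup B}\subseteq S$, completing the proof.
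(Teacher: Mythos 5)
Your proof is correct and follows essentially the same strategy as the paper's: partition the index set of the convex combination according to whether the point lies in $A$ or in $B\setminus A$, normalize the two partial sums, and treat the degenerate endpoint cases using the nonemptiness of $A$ and $B$. The only cosmetic differences are that you invoke the convexity of $\cvx{A\cup B}$ for the easy inclusion rather than recombining coefficients explicitly, and you are a bit more explicit than the paper in writing out the $t=0$ and $t=1$ cases in the required form.
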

\begin{proof}
Let $x \in \cvx{A\cup B}$. Then there exist $x_i \in A\cup B$ and $t_i \in [0, 1]$, 
$1 \leq i \leq l$, such that
$x = \sum_{1}^{l} t_ix_i$ and $\sum_{1}^{l} t_i = 1$.
Consider the following sets
\[
I_A(x) = \big\{i:1\leq i\leq l, \ x_i \in A\setminus B\big\}, \quad I_B(x) =
\big\{i : 1\leq i\leq l, \ x_i \in B\big\}.
\]
Notice that $I_A(x) \cap I_B(x) = \emptyset$ and $I_A(x) \cup I_B(x) = \{1, \ldots, l\}$.
Therefore, we have $x = \sum_{i \in I_A(x)}t_ix_i + \sum_{i \in I_B(x)}t_ix_i$.
Assuming that $ \sum_{i \in I_A(x)}t_i \neq 0, 1$, we write
\[
x = \lambda \; \Big(\sum_{i \in I_A(x)}(t_i/\lambda)x_i\Big) + \mu \; \Big(
\sum_{i \in I_B(x)}(t_i/{\mu})x_i\Big),
\]
where $\lambda = \sum_{i \in I_A(x)}t_i$, $\mu = \sum_{i \in I_B(x)}t_i$.
Clearly $\lambda + \mu = 1$. If we set
\[
\alpha = \sum_{i \in I_A(x)}({t_i}/{\lambda})x_i, \quad \beta
 = \sum_{i \in I_B(x)}({t_i}/{\mu})x_i.
\]
then $\alpha \in \cvx{A}$ and $\beta \in \cvx{B}$, since $\sum_{i \in I_A(x)}
({t_i}/{\lambda}) = \sum_{i \in I_B(x)}({t_i}/{\mu}) = 1$.
Hence, $x = \lambda\alpha + (1 - \lambda)\beta$. We also notice if $\lambda=0$
or $\lambda=1$ then $x\in\cvx{B}$ or $x\in\cvx{A}$ respectively.
This establishes that $\cvx{A\cup B} \subseteq \big\{t\alpha+(1-t)\beta :
 t\in[0,\,1], \ \alpha\in\cvx{A}, \ \beta\in\cvx{B}\big\}$.
 \smallskip
 
To see the converse, let
$x = t\alpha + (1 - t)\beta$ for some $t \in [0, 1]$ and $\alpha \in \cvx{A}, \beta\in\cvx{B}$.
Now there exist $a_i \in A$, $\lambda_i \in [0, 1], 1 \leq i \leq l$ and $b_j \in B$,
$\gamma_j \in[0, 1], 1 \leq j \leq m$, such that
\[
\alpha =  \sum_{1}^{l}\lambda_ia_i, \ \sum_{1}^{l}\lambda_i = 1 \ \text{and}
 \  \beta =  \sum_{1}^{m}\gamma_jb_j, \ \sum_{1}^{m}\gamma_j = 1
\]
Therefore, $x = \sum_{1}^{l}t\lambda_ia_i + \sum_{1}^{m}(1 - t)\gamma_jb_j
\text{ and } \sum_{1}^{l}t\lambda_i+ \sum_{1}^{m}(1 - t)\gamma_j = 1$.
This proves that $x \in \cvx{A\cup B}$.
\end{proof}

We wish to compute the Newton polytope $\nwtp{P}$ with $P$ as in problem $(*)$.
Before we do this in our next lemma, we make the following
observations:
\begin{itemize}
\item[$(i)$] Given a nonconstant polynomial $Q\in F[z_1,\ldots,z_n]$, $Q$ is irreducible if
and only if 
$\widetilde{Q}(z_1,\ldots,z_n):=Q(z_1+a_1,\ldots,z_n+a_n)$, where $a_i$'s are any
elements of $F$, is irreducible.

\item[$(ii)$] Given a nonconstant polynomial $Q\in F[z_1,\ldots,z_n]$
such that $Q(0)\neq 0$, we know there exist
$a_i\in F, 1\leq i\leq n$ such that $Q(a_1,\ldots,a_n)=0$. Then 
$\widetilde{Q}(z_1,\ldots,z_n):=Q(z_1+a_1,\ldots,z_n+a_n)$ satisfies $\widetilde{Q}(0)=0$.
Notice that the fact that $F$ is algebraically closed is crucial here.
\end{itemize}

Because of $(i)$ and $(ii)$ above, we can assume, without loss of generality, when considering the irreducibility/reducibility of $P$, that 
$P_1(0)=0$ and $P_2(0)=0$. Now we compute $\nwtp{P}$.
\begin{lemma}\label{L:newtpdecompoly}
Let $n\geq 2$ and let $P\in F[z_1,\dots,z_n]$ be a polynomial such that $P=P_1 + P_2$
where $P_1\in F[z_1,\dots,z_{\nu}]$ and $P_2\in F[z_{\nu+1},\dots,z_n]$ be such that 
$P_1(0)=0, P_2(0)=0$. Then the Newton polytope of $P$ is given by:
\[
\nwtp{P}= \big\{t\alpha+(1-t)\beta : t\in[0,\,1], \ \alpha\in\nwtp{P_1},
\ \beta\in\nwtp{P_2}\big\}.
\]
\end{lemma}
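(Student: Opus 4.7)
The plan is to reduce the lemma to a direct application of Lemma~\ref{L:cvxhullunion} by first identifying $\supp{P}$ with $\supp{P_1}\cup\supp{P_2}$ (viewed inside $\nat^n$).

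The first step is to view each $P_j$ as an element of $\C[z_1,\ldots,z_n]$ in the obvious way, so that $\supp{P_1}\subset \nat^\nu\times\{0\}^{n-\nu}$ and $\supp{P_2}\subset \{0\}^{\nu}\times\nat^{n-\nu}$. The key observation I would make next is that the hypotheses $P_1(0)=0$ and $P_2(0)=0$ ensure that the origin lies in neither support. Consequently every element of $\supp{P_1}$ has at least one of its first $\nu$ coordinates strictly positive, while every element of $\supp{P_2}$ has all of its first $\nu$ coordinates equal to $0$. Hence
\[
\supp{P_1}\cap\supp{P_2}=\emptyset.
\]

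With this disjointness in hand, the second step is to argue that no cancellation occurs when forming $P=P_1+P_2$, so that the nonzero coefficients of $P$ are precisely the nonzero coefficients of $P_1$ together with those of $P_2$. This gives
\[
\supp{P}=\supp{P_1}\cup\supp{P_2}.
\]
Taking convex hulls and recalling the definition of the Newton polytope, I obtain
\[
\nwtp{P}=\cvx{\supp{P_1}\cup\supp{P_2}}.
\]

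The third and final step is to invoke Lemma~\ref{L:cvxhullunion} with $A=\supp{P_1}$ and $B=\supp{P_2}$ (both are nonempty finite sets since $P_1,P_2$ are nonconstant polynomials), which immediately yields
\[
\nwtp{P}=\big\{t\alpha+(1-t)\beta : t\in[0,1],\ \alpha\in\nwtp{P_1},\ \beta\in\nwtp{P_2}\big\},
\]
as desired. There is no real obstacle here; the only point that requires any care is the justification of $\supp{P}=\supp{P_1}\cup\supp{P_2}$, and this rests entirely on the disjoint variable sets together with the normalization $P_j(0)=0$. That normalization is precisely what observations $(i)$ and $(ii)$ above permit us to assume without loss of generality, so the lemma follows cleanly.
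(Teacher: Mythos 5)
Your proof is correct and follows essentially the same route as the paper: reduce to showing $\supp{P}=\supp{P_1}\cup\supp{P_2}$, establish disjointness of the supports using the normalization $P_j(0)=0$, deduce that no cancellation occurs when summing, and then invoke Lemma~\ref{L:cvxhullunion}. The only cosmetic difference is that the paper derives its contradiction by showing any common support point would have to equal the origin, while you phrase the same observation directly in terms of which coordinates are forced to vanish or be strictly positive.
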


\begin{proof}
The reader will discern that the above follows from Lemma~\ref{L:cvxhullunion},
once we establish that $\supp{P} = \supp{P_1} \cup \supp{P_2}$.
Notice that $\supp{P} \subseteq \supp{P_1} \cup \supp{P_2}$.
\smallskip

\noindent{\bf Claim.} $\supp{P_1}\cap \supp{P_2} = \emptyset$.

\noindent To see this let $\alpha \in \supp{P_1}\cap\supp{P_2}$. 
This implies $\alpha \in \supp{P_1}$ and $\alpha \in \supp{P_2}$. 
We know $\supp{P_1} \subseteq \big\{(z_1, z_2, \ldots, z_n) \in \C^n: z_{\nu + 1}
 = \cdots = z_n = 0\big\}$ and $\supp{P_2} \subseteq \big\{(z_1, z_2, \ldots, z_n)
 \in \C^n: z_{1} = \cdots = z_\nu = 0\big\}$ where $1 \leq \nu \leq n$.
Hence $\alpha = 0\in\R^n$. However, since $P_1(0) = 0$ and $P_2(0) = 0$,
this is a contradiction. Thus, $\supp{P_1}\cap \supp{P_2} = \emptyset$.
\smallskip

It follows from the claim above that $\supp{P_1}  \cup \supp{P_2} \subseteq \supp{P}$
since none of the monomials in $P_1$ can cancel out the monomials in $P_2$ and viceversa.
Hence we have $\supp{P_1}  \cup \supp{P_2} = \supp{P}$. The lemma itself now follows 
from Lemma~\ref{L:cvxhullunion}.
\end{proof}

Now we establish the result that was alluded to in the introduction of this section. 

\begin{proposition}\label{P:facesofpoly}
The Newton polytopes $\nwtp{P_1}$ and $\nwtp{P_2}$ associated to 
the polynomials $P_1$ and $P_2$, as in Lemma~\ref{L:newtpdecompoly}, are faces of $\nwtp{P}$.
\end{proposition}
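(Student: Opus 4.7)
The plan is to exhibit explicit supporting hyperplanes of $\nwtp{P}$ whose intersections with $\nwtp{P}$ are $\nwtp{P_1}$ and $\nwtp{P_2}$, respectively. Since $P_1\in\C[z_1,\ldots,z_\nu]$ with $P_1(0)=0$, every $x\in\supp{P_1}$ satisfies $x_{\nu+1}+\cdots+x_n=0$; while $P_2\in\C[z_{\nu+1},\ldots,z_n]$ with $P_2(0)=0$ forces $0\notin\supp{P_2}$, so every $x\in\supp{P_2}$ has at least one positive coordinate among $x_{\nu+1},\ldots,x_n$ and, being an integer point, satisfies $x_{\nu+1}+\cdots+x_n\geq 1$. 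Both linear conditions are preserved under convex combinations, so they pass to $\nwtp{P_1}$ and $\nwtp{P_2}$, respectively.

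Next, consider the hyperplane
\[
H := \big\{x\in\R^n : x_{\nu+1}+\cdots+x_n=0\big\}.
\]
Since every point of $\supp{P}$ has nonnegative coordinates, $\nwtp{P}$ lies in the half-space $H^{+}:=\{x:x_{\nu+1}+\cdots+x_n\geq 0\}$, so $H$ is a supporting hyperplane for $\nwtp{P}$; the inclusion $\nwtp{P_1}\subseteq H\cap\nwtp{P}$ is then immediate. The remaining, and main, step is the reverse inclusion $H\cap\nwtp{P}\subseteq\nwtp{P_1}$.

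To establish this, apply Lemma~\ref{L:newtpdecompoly} to write any $y\in\nwtp{P}$ as $y=t\alpha+(1-t)\beta$ with $\alpha\in\nwtp{P_1}$, $\beta\in\nwtp{P_2}$, and $t\in[0,1]$. By the two observations above,
\[
y_{\nu+1}+\cdots+y_n \;=\; (1-t)\big(\beta_{\nu+1}+\cdots+\beta_n\big) \;\geq\; 1-t.
\]
If $y\in H$, the left hand side vanishes, which forces $t=1$ and hence $y=\alpha\in\nwtp{P_1}$. Thus $H\cap\nwtp{P}=\nwtp{P_1}$, so $\nwtp{P_1}$ is a face of $\nwtp{P}$.

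For $\nwtp{P_2}$ the argument is entirely symmetric: one replaces $H$ by $H':=\{x\in\R^n:x_1+\cdots+x_\nu=0\}$ and swaps the roles of $P_1$ and $P_2$. The only subtlety I foresee is ensuring that the bound $\beta_{\nu+1}+\cdots+\beta_n\geq 1$ genuinely transfers from $\supp{P_2}$ to all of $\nwtp{P_2}$; this is routine, since a linear inequality holding on every point of a finite set automatically holds on its convex hull.
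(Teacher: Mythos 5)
Your proof is correct, and it takes a genuinely different route from the paper's. The paper considers the $n-\nu$ coordinate hyperplanes $H_j=\{x:x_j=0\}$ for $j=\nu+1,\ldots,n$, observes that each $\nwtp{P}\cap H_j$ is a face, proves $\nwtp{P_1}=\bigcap_{j=\nu+1}^n\big(\nwtp{P}\cap H_j\big)$, and then invokes the general fact (\cite[Lemma~4.5]{Ewald:CCAAG}) that a finite intersection of faces is a face. You instead exhibit a \emph{single} supporting hyperplane $H=\{x:x_{\nu+1}+\cdots+x_n=0\}$ and verify directly that $H\cap\nwtp{P}=\nwtp{P_1}$, which is exactly the definition of a face, so the auxiliary intersection-of-faces lemma is not needed. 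This is arguably leaner: it reduces the cited machinery and packages the observation that $0\notin\supp{P_2}$ into the clean integral bound $\beta_{\nu+1}+\cdots+\beta_n\geq 1$ for $\beta\in\nwtp{P_2}$, which is a closed linear inequality and hence passes painlessly to the convex hull (in the paper, the analogous step is an argument by contradiction showing $\beta=0$ coordinate by coordinate). The paper's approach is perhaps more transparent about which coordinate constraints cut out $\nwtp{P_1}$, but both arguments are sound and rely on the same underlying inputs, namely Lemma~\ref{L:newtpdecompoly} and the normalization $P_j(0)=0$.
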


\begin{proof}
For every $j\in\{1,\ldots, n\}$, let us consider
\[
H_j := \big\{(x_1, \ldots, x_n) \in \R^n:x_j=0\big\} \text{ and } H_j^{+} :=
\big\{x\in\R^n: x_j \geq 0\big\}.
\]
Notice that $\nwtp{P} \subset H_j^{+}$ for all $j\in\{1,\ldots, n\}$ and
$\nwtp{P}\cap H_j \neq \emptyset$ for every $j\in\{1,\ldots, n\}$.
Hence, from the definition of a face, $\nwtp{P} \cap H_j$ are faces of $\nwtp{P}$.
\smallskip

\noindent{\bf Claim.} $\nwtp{P_1} = \bigcap^{n}_{\nu + 1}\big(\nwtp{P} \cap H_j\big)$.\smallskip

\noindent{It is clear that $\nwtp{P_1} \subseteq \bigcap^{n}_{\nu + 1}
\big(\nwtp{P} \cap H_j\big)$ since $\nwtp{P_1} \subset H_j$ for every
$j\in\{\nu + 1, \ldots, n\}$. To see the converse, let 
$x\in\bigcap^{n}_{\nu + 1}\big(\nwtp{P} \cap H_j\big)$ then
$x\in\nwtp{P}\cap H_j$ for every $j\in\{\nu + 1, \ldots, n\}$.
This implies that there exist $t\in[0,1]$ and $\alpha\in\nwtp{P_1}, \beta\in\nwtp{P_2}$ such that
\[
x = t\alpha + (1 - t)\beta \text{ and } x_j = 0 \ \text{for all} \  j\geq\nu + 1.
\]
If $t=1$, we see that $x=\alpha\in\nwtp{P_1}$. So, let us suppose this is not the case, i.e.,
$t\neq1$. We have $x_j = t\alpha_j + (1 - t)\beta_j$ and $\alpha_j = 0$ for every
$j \geq \nu + 1$. Hence for each $j \geq \nu + 1$, $x_j = 0$ if and only if $(1 - t)\beta_j = 0$.
This implies that $\beta = 0\in\R^n$, which is a contradiction since $0\notin\nwtp{P_2}$.
So $t=1$ and $x\in\nwtp{P_1}$. Therefore the converse holds true and the claim
above is established.}

Thus $\nwtp{P_1}$ is the intersection of finitely many faces of $\nwtp{P}$. 
It is a fact (\cite[Lemma~4.5, p.15]{Ewald:CCAAG}) that the intersection of finitely many faces of a convex set is also a face.
Therefore, $\nwtp{P_1}$ is a face of $\nwtp{P}$.
Arguing in a similar fashion and working with hyperplanes $H_j$, $1\leq j\leq \nu$, we see that
$\nwtp{P_2}$ is also a face of $\nwtp{P}$.
\end{proof}

We shall now present a proof of Proposition~\ref{P:anocritirred}. Before that we need a result 
that says how faces of a polytope decompose under Minkowski sum. We shall also use this result in Section~\ref{S:proofMT}. The reader is referred to
\cite[Theorem~1.5, p.~105]{Ewald:CCAAG} for a proof of this.
\begin{result}\label{Res:minkdecface}
Let $A$ and $B$ be polytopes in $\R^n$ and suppose $C=A+B$. Then
every face of $C$ is a Minkowski sum of unique faces of $A$ and $B$.
\end{result}
We are now ready to present

\begin{proof}[Proof of Proposition~\ref{P:anocritirred}]
We shall assume for simplicity that $i=1$ in Proposition~\ref{P:anocritirred}.
Let $(b_{\nu+1},\ldots,b_n)$ be such that if we write
\[
\widetilde{P}(z_1,\ldots,z_n):=P(z_1,\ldots,z_\nu,z_{\nu+1}+b_{\nu+1},\ldots,z_n+b_n)
 =P_1 + \widetilde{P_2}
\]
where $\widetilde{P_2}(z_1,\ldots,z_n):=P_2(z_{\nu+1}+b_{\nu+1},\ldots,z_n+b_n)$, then 
$\widetilde{P_2}(0)=0$. Now, suppose $P$ is reducible over $F$ then so is $\widetilde{P}(z_1,\ldots,z_n)$. Let $Q_1$, $Q_2$ be two nonconstant 
polynomials such that $\widetilde{P}=Q_1\,Q_2$. By Result~\ref{Res:factpolyminksum} we have 
\[
\nwtp{\widetilde{P}}=\nwtp{Q_1}+\nwtp{Q_2}.
\]
By Proposition~\ref{P:facesofpoly}, we see that $\nwtp{P_1}$ is a 
face of $\nwtp{\widetilde{P}}$. Hence, Result~\ref{Res:minkdecface} implies that there exist faces $A_1$, $B_1$ of polytopes $\nwtp{Q_1}$, $\nwtp{Q_2}$ respectively such that
\begin{equation}\label{E:P1red}
\nwtp{P_1}=A_1 + B_1.
\end{equation}

\noindent{{\bf Claim.} Both $A_1$ and $B_1$ must contain at least two points.}
\smallskip

\noindent{We begin with the observation that each vertex of $A_1$, $B_1$ is a vertex of $\nwtp{Q_1}$, $\nwtp{Q_2}$
respectively (follows from Result~\ref{Res:transf}). We also recall 
that for a polynomial $P$, the vertex set of $\nwtp{P}$ is a subset of $\supp{P}$.
Now, suppose $A_1$ consists of only a single
point, i.e., $A_1=\{\alpha=(\alpha_1,\dots,\alpha_n):\alpha\neq\bf{0}\}$, where
each $\alpha_j$ is a positive integer. Then every vertex $v$ of $\nwtp{P_1}$ is given by $v=w+\alpha$, where $w$ is a vertex of $B_1$. This will imply that
$z^{\alpha}:=z^{\alpha_1}\dots z^{\alpha_n}$ divides ${P_1}$ which is a contradiction to our hypothesis. Hence $A_1$ must contain at least two 
points. Similarly, $B_1$ must contain at least two points.}
\smallskip

The above claim together with \eqref{E:P1red} implies that $\nwtp{P_1}$ is 
integrally decomposable which is a contradiction to our hypothesis. Therefore $\widetilde{P}$ and consequently $P$ must be irreducible over $F$. 
\end{proof}

\section{Proof of the main theorem}\label{S:proofMT}
In this section, we shall present the proof of our main theorem. In this direction, let $P$ be a polynomial in $F[z_1,\ldots,z_n]$ such that 
$P=P_1+P_2$, where $P_1\in F[z_1,\ldots,z_\nu], \ P_2\in F[z_{\nu+1},\ldots,z_n]$ are nonconstant polynomials with $P_j(0)=0$.
We shall first describe certain faces of the Newton polytope of $P$. For this
purpose we shall need a well known result about the geometry of polytopes. The 
result is:
\begin{result}\label{Res:transf}
	Let $C$ be a polytope and let $F_1$ be a face of $C$. Suppose $F_0$ is any
    face of $F_1$ then $F_0$ will be a face of $C$.
\end{result}
\noindent The reader is referred to \cite[Theorem~1.7, p.~31]{Ewald:CCAAG} for a proof of this result.
\smallskip

We now begin with describing certain faces of $\nwtp{P}$. Define:
\[
\mathfrak{A}_j:=\Big\{v\in\R^n : v \ \text{is a vertex of} \ \nwtp{P_j} \ \text{such that} \ 
\sum_{1}^{n}v_i = d(P_j)\Big\}.
\]
Here, $d(P_j)$, $j=1,2$, denotes the degree of $P_j$.
It is clear that $\mathfrak{A}_j\neq\emptyset$, $j=1,2$. We also consider the hyperplane defined by:
\begin{equation}
H_0:=\Big\{x\in\R^n : d(P_2)\Big(\sum_{1}^\nu x_i\Big) + d(P_1)\Big(\sum_{1}^{n-\nu} x_{\nu+j}\Big)
=d(P_1)d(P_2)\Big\}.\label{E:hypmain}
\end{equation}

\begin{proposition}\label{P:cyldrface}
The convex set $H_0\cap \nwtp{P}$ is a face of $\nwtp{P}$. Moreover,
\begin{equation}
H_0\cap \nwtp{P}=\Big\{t\alpha+(1-t)\beta : t\in [0, 1], \ \alpha\in\cvx{\mathfrak{A}_1},\ 
\beta\in\cvx{\mathfrak{A}_2}\Big\}.\label{E:cyldrface}
\end{equation}
\end{proposition}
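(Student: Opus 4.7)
\textbf{Proof sketch of Proposition~\ref{P:cyldrface}.}
My plan is to exhibit $H_0$ as a supporting hyperplane of $\nwtp{P}$, which immediately delivers the face claim, and then to read off the explicit description \eqref{E:cyldrface} by analysing when the supporting inequality becomes an equality.

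I would begin by introducing the linear functional
\[
\phi(x) := d(P_2)\sum_{i=1}^{\nu} x_i + d(P_1)\sum_{j=1}^{n-\nu} x_{\nu+j},
\]
so that $H_0 = \{\phi = d(P_1)d(P_2)\}$, and then bound $\phi$ separately on each of $\nwtp{P_1}$ and $\nwtp{P_2}$. Since $\nwtp{P_1} \subseteq \R^{\nu}\times\{0\}^{n-\nu}$, for $\alpha\in\nwtp{P_1}$ the formula collapses to $\phi(\alpha) = d(P_2)(\alpha_1+\cdots+\alpha_\nu)$. The coordinate-sum function is bounded above by $d(P_1)$ on $\supp{P_1}$, hence on its convex hull, so $\phi \leq d(P_1)d(P_2)$ on $\nwtp{P_1}$, with equality on the face $F_1 := \nwtp{P_1}\cap\{x:\sum_{i=1}^{\nu}x_i = d(P_1)\}$. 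The vertices of $F_1$ are precisely the vertices of $\nwtp{P_1}$ lying on $F_1$, which is exactly $\mathfrak{A}_1$, whence $F_1 = \cvx{\mathfrak{A}_1}$. A symmetric argument on the other block of coordinates yields the analogous bound on $\nwtp{P_2}$ with equality set $\cvx{\mathfrak{A}_2}$.

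From Lemma~\ref{L:newtpdecompoly} any $x\in\nwtp{P}$ can be written as $x = t\alpha + (1-t)\beta$ with $\alpha\in\nwtp{P_1}$, $\beta\in\nwtp{P_2}$ and $t\in[0,1]$, so linearity of $\phi$ combined with the two bounds above yields $\phi(x)\leq d(P_1)d(P_2)$. This shows $\nwtp{P}\subseteq H_0^{-}$, so $H_0\cap\nwtp{P}$ is a face of $\nwtp{P}$. To obtain \eqref{E:cyldrface} I would then examine the equality case: when $t\in(0,1)$, a convex combination of $\phi(\alpha),\phi(\beta)\leq d(P_1)d(P_2)$ equalling $d(P_1)d(P_2)$ forces both $\phi(\alpha) = d(P_1)d(P_2)$ and $\phi(\beta) = d(P_1)d(P_2)$, i.e.\ $\alpha\in\cvx{\mathfrak{A}_1}$ and $\beta\in\cvx{\mathfrak{A}_2}$; the boundary cases $t\in\{0,1\}$ are handled by observing that $\cvx{\mathfrak{A}_1}$ and $\cvx{\mathfrak{A}_2}$ are nonempty, so the unused endpoint can be replaced arbitrarily. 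The reverse inclusion in \eqref{E:cyldrface} is immediate from $\phi\equiv d(P_1)d(P_2)$ on $\cvx{\mathfrak{A}_1}\cup\cvx{\mathfrak{A}_2}$ together with Lemma~\ref{L:newtpdecompoly}.

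The only mildly delicate step is identifying the equality face $F_j$ with $\cvx{\mathfrak{A}_j}$; this rests on the standard fact that the vertices of a face of a polytope are precisely the vertices of the ambient polytope that lie on that face, which can be verified directly from the definitions or cited from any of the polytope references. Everything else is a routine manipulation of Lemma~\ref{L:newtpdecompoly} and linearity of $\phi$.
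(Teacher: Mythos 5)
Your proposal is correct and takes essentially the same route as the paper: the same linear functional ($\phi = L$), the same decomposition of points of $\nwtp{P}$ via Lemma~\ref{L:newtpdecompoly}, and the same block-by-block bound with equality analysis. The only cosmetic difference is that you identify the equality set $\cvx{\mathfrak{A}_1}$ by invoking the standard fact that vertices of a face are the ambient vertices lying on it, whereas the paper derives the same conclusion by writing $X_1$ explicitly as a convex combination of the vertices $w_1,\dots,w_k$ of $\nwtp{P_1}$ and tracking which weights must vanish; your remark on the degenerate cases $t\in\{0,1\}$ is in fact slightly more careful than the paper's terse phrasing.
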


\begin{proof}
Let $L(x):=d(P_2)\big(\sum_{1}^\nu x_i\big) + d(P_1)\big(\sum_{1}^{n-\nu} x_{\nu+j}\big)$. Then if we
let $X=tX_1+(1-t)X_2$, where $X_1\in\nwtp{P_1}, X_2\in\nwtp{P_2}$ and $t\in [0, 1]$, then $L(X)=
tL(X_1)+(1-t)L(X_2)$.
\smallskip

Let $\{w_1,\ldots,w_k\}$ be the set of all vertices of $\nwtp{P_1}$. Then
there exist $s_j\in[0,1]$, $1\leq j\leq k$, and $\sum_{1}^k s_j=1$ such that
we have:
\begin{align*}
X_1&=\sum_{1}^k s_jw_j=\sum_{1}^k s_j(w_{j1},\ldots,w_{j\nu},0,\ldots,0)\\
&=\Big(\sum_{1}^k s_jw_{j1},
\sum_{1}^k s_jw_{j2},\ldots,\sum_{1}^k s_jw_{j\nu},0,\ldots,0\Big).\
\end{align*}
Here, $w_{ji}$ denotes the $i$-th coordinate of the vertex $w_j$.
It follows then that
\begin{equation}
\sum_{i=1}^{\nu}X_{1i} \ = \ \sum_{i=1}^{\nu}\;\sum_{j=1}^k s_jw_{ji} \ = \ \sum_{j=1}^ks_j\Big(\sum_{i=1}^{\nu}w_{ji}\Big).\label{E:sum1X1}
\end{equation}
For each $j$, since $w_j$ is a vertex of $\nwtp{P_1}$, we have
$
\sum_{i=1}^{\nu} w_{ji}\leq d(P_1) \label{E:sumvP1}.
$
From this and \eqref{E:sum1X1} we have:
\[
\sum_{i=1}^{\nu} X_{1i} \ = \ \sum_{j=1}^ks_j\Big(\sum_{i=1}^{\nu}w_{ji}\Big) \ \leq \ \sum_{j=1}^k s_j d(P_1) \ = \ d(P_1).
\]
Hence $L(X_1)=d(P_2)\sum_{i=1}^{\nu}X_{1i}\leq d(P_2)d(P_1)$. This proves that $\nwtp{P_1}$ lies 
in the negative half space determined by $H_0$.
\smallskip

The inequality $L(X_1)\leq d(P_2)d(P_1)$ becomes an equality if and only if 
\begin{equation}
\sum_{i=1}^{\nu} X_{1i} \ = \ d(P_1) \ = \ \sum_{j=1}^ks_j\Big(\sum_{i=1}^\nu w_{ji}\Big) \ = \ d(P_1).
\label{E:vertA1}
\end{equation}
Define $\mathscr{A}:=\{j: 1\leq j\leq k: s_j\neq 0\}$. Then it follows from \eqref{E:vertA1} that for each
$j\in\mathscr{A}$, $w_j\in\mathfrak{A}_1$.
\smallskip
 
 Therefore, if $X_1\in\nwtp{P_1}$ then $L(X_1)\leq d(P_1)d(P_2)$, and this inequality becomes an equality if and only if 
 $X_1\in\cvx{\mathfrak{A}_1}$. Similarly, we shall have that if $X_2\in\nwtp{P_2}$ then $L(X_2)\leq d(P_1)d(P_2)$ and this inequality
 becomes an equality if and only if $X_2\in\cvx{\mathfrak{A}_2}$. From these two assertions it follows that
 $L(X)\leq d(P_1)d(P_2)$ and this inequality is an equality if and only if $X_j\in\cvx{\mathfrak{A}_j}$. This proves 
 that $H_0\cap\nwtp{P}$ is a face of $\nwtp{P}$ and is given by \eqref{E:cyldrface}.
\end{proof}

Let us consider the following sets:
\begin{align*}
\maf{A}_{11}&=\big\{v\in\maf{A}_1:v_1\geq w_1 \ \text{for any} \  w\in\maf{A}_1\big\}, \ \text{and} \\
\maf{A}_{1i}&=\big\{v\in\maf{A}_{1(i-1)}:v_{i}\geq w_{i} \ \text{for any} \  w\in\maf{A}_{1(i-1)}\big\} \ \text{for each $i$, $2\leq i\leq\nu$}.
\end{align*}
Notice that since $\maf{A}_1$ is nonempty, each of $\maf{A}_{1i}$ is nonempty.
Moreover we have
\[\maf{A}_1 \ \supseteq \ \maf{A}_{11} \ \supseteq\dots\supseteq \ \maf{A}_{1\nu}.\]
Notice that $\maf{A}_{1\nu}$ is a singleton set. This is because if $X_1,X_2
\in\maf{A}_{1\nu}$ then $X_{1i}=X_{2i}$ for all $i$, $1\leq i\leq \nu-1$.
Now since $\sum_{1}^{\nu} X_{1i}=\sum_{1}^{\nu} X_{2i}$, we have $X_{1\nu}=X_{2\nu}$. Hence $X_1=X_2$.
\smallskip

For each $i$, $1\leq i\leq\nu$, we consider 
\begin{align*}
C_i&=\Big\{t\alpha+(1-t)\beta: t\in[0,1], \ \alpha\in\cvx{\maf{A}_{1i}}, \ \beta\in\cvx{\maf{A}_2}\Big\}\\
H_i&=\Big\{x\in\R^n:d(P_2)x_i +\Big(\sum_{1}^{n-\nu}x_{\nu+j}\Big)v_i=v_id(P_2)\Big\}.
\end{align*}
Here, $v_i$ is the $i$-th coordinate of any vector $v\in\maf{A}_{1i}$.
Now we present two very crucial lemmas that we shall need in our proof of the 
main theorem.

\begin{lemma}\label{L:1stface}
	If we let $C_0=H_0\cap\nwtp{P}$ be as in the Proposition~\ref{P:cyldrface}.
	Then $C_0\cap H_1=C_1$ and $C_1$ is a face of $C_0$. 
\end{lemma}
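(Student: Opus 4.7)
My plan is to view $H_1$ through the linear functional
\[
L_1(x) \ := \ d(P_2) x_1 + v_1 \sum_{j=1}^{n-\nu} x_{\nu+j},
\]
so that $H_1 = \{L_1(x) = v_1 d(P_2)\}$, and then to show that $L_1$ is bounded above by $v_1 d(P_2)$ on $C_0$, with equality precisely on $C_1$. Once this is proved, $H_1$ is a supporting hyperplane for $C_0$ (the set $C_1$ being nonempty since $\maf{A}_{11}$ is), and the identity $C_0 \cap H_1 = C_1$ is immediate, so $C_1$ will be a face of $C_0$ by definition.

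For the bound, I would exploit the additive decomposition $X = t\alpha + (1-t)\beta$ of a generic $X \in C_0$, with $\alpha \in \cvx{\maf{A}_1}$ and $\beta \in \cvx{\maf{A}_2}$. Since $\maf{A}_1 \subset \supp{P_1}$, the last $n-\nu$ coordinates of any $\alpha \in \cvx{\maf{A}_1}$ vanish, so $L_1(\alpha) = d(P_2)\alpha_1$. Writing $\alpha = \sum s_j w_j$ with $w_j \in \maf{A}_1$, the maximality built into the definition of $\maf{A}_{11}$ gives $w_{j1} \leq v_1$, with equality iff $w_j \in \maf{A}_{11}$; hence $L_1(\alpha) \leq d(P_2) v_1$, with equality iff $\alpha \in \cvx{\maf{A}_{11}}$. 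On the other side, for $\beta \in \cvx{\maf{A}_2}$ the first $\nu$ coordinates vanish, and every vertex $u \in \maf{A}_2$ satisfies $\sum_i u_i = d(P_2)$, a property preserved under convex combinations, giving $L_1(\beta) = v_1 d(P_2)$ identically.

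Combining by linearity, $L_1(X) = t L_1(\alpha) + (1-t) v_1 d(P_2) \leq v_1 d(P_2)$, and this inequality is tight iff either $t = 0$ or $\alpha \in \cvx{\maf{A}_{11}}$; both possibilities (noting $t = 0$ is permitted in the definition of $C_1$, whence $\cvx{\maf{A}_2} \subset C_1$) correspond exactly to $X \in C_1$. Thus $C_0 \cap H_1 = C_1$.

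I do not anticipate a real obstacle: the argument is a direct refinement of the reasoning used for Proposition~\ref{P:cyldrface}, now peeling off the first-coordinate direction on top of the total-degree direction already exploited there. The only small point requiring care is the boundary case $t = 0$ when characterizing equality, which fits cleanly because $C_1$ is defined with $t$ ranging over the closed interval $[0,1]$.
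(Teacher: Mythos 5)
Your proposal is correct and follows essentially the same route as the paper: introduce $L_1$, check $L_1 \le v_1 d(P_2)$ on $C_0$ by linearity along the decomposition $X = t\alpha + (1-t)\beta$, and identify the equality locus with $C_1$. If anything you are a shade more careful than the published proof on the boundary case $t=0$, but the idea and the computation are identical.
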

\begin{proof}
	Let $\alpha\in C_0$ then there exist $X_1\in\cvx{\maf{A}_1}$, $X_2\in\cvx{\maf{A}_2}$ such that $\alpha=tX_1+(1-t)X_2$, $t\in[0,1]$.
	If we let
	$L_1(x)=d(P_2)x_1 + \big(\sum_{1}^{n-\nu}x_{\nu+j}\big)v_1$, then 
	$L_1(\alpha)=tL_1(X_1)+(1-t)L_1(X_2)$. Now $L_1(X_1)=d(P_2)X_{11}$ and 
	since $v_1\geq w_1$ for any $w\in\maf{A}_1$, we see that $v_1\geq X_{11}$
	with equality when $X_1$ is a convex combination of vertices belonging to
	$\maf{A}_1$ whose first co-ordinate is $v_1$.
	\smallskip
	
	Clearly $L_1(X_2)=v_1d(P_2)$. Hence we see that $tL_1(X_1)+(1-t)L_1(X_2)=
	td(P_2)X_{11}+(1-t)v_1d(P_2)\leq tv_1d(P_2)+(1-t)v_1d(P_2)=v_1d(P_2)$. The 
	inequality in here is an equality if and only if $X_1\in\cvx{\maf{A}_{11}}$.
	This establishes the lemma above.
\end{proof}

Our next lemma concludes that for each $i$, $1\leq i\leq \nu-1$, $C_{i+1}$ is a face of $C_i$.
\begin{lemma}\label{L:subseqface}
	For each $i$, $1\leq i\leq \nu-1$, $C_i\cap H_{i+1}$ is a face of $C_i$ and 
	is equal to $C_{i+1}$, where $H_{i+1}$ is the hyperplane as defined above.
\end{lemma}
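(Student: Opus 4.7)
The plan is to mirror the argument of Lemma~\ref{L:1stface}, only now with the hyperplane $H_{i+1}$ playing the role of $H_1$ and with the chain $\maf{A}_{1i}\supseteq \maf{A}_{1(i+1)}$ replacing the chain $\maf{A}_1\supseteq\maf{A}_{11}$. The key point is that, by definition of $\maf{A}_{1(i+1)}$, every element $w\in\maf{A}_{1i}$ satisfies $w_{i+1}\leq v_{i+1}$ with equality exactly when $w\in\maf{A}_{1(i+1)}$; this is the single geometric fact that drives everything.

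Concretely, I would start from an arbitrary $\alpha\in C_i$, write $\alpha=tX_1+(1-t)X_2$ with $X_1\in\cvx{\maf{A}_{1i}}$, $X_2\in\cvx{\maf{A}_2}$ and $t\in[0,1]$, and introduce the affine functional $L_{i+1}(x)=d(P_2)x_{i+1}+\bigl(\sum_{1}^{n-\nu}x_{\nu+j}\bigr)v_{i+1}$ that defines $H_{i+1}$. Linearity gives $L_{i+1}(\alpha)=tL_{i+1}(X_1)+(1-t)L_{i+1}(X_2)$. Since $X_1$ is supported in the first $\nu$ coordinates, $L_{i+1}(X_1)=d(P_2)X_{1(i+1)}$, and writing $X_1=\sum s_jw_j$ as a convex combination of elements of $\maf{A}_{1i}$ yields $X_{1(i+1)}=\sum s_j w_{j(i+1)}\leq v_{i+1}$ with equality if and only if every $w_j$ with $s_j>0$ lies in $\maf{A}_{1(i+1)}$, i.e. $X_1\in\cvx{\maf{A}_{1(i+1)}}$. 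On the other hand $X_2\in\cvx{\maf{A}_2}$ is supported in the last $n-\nu$ coordinates and satisfies $\sum_{1}^{n-\nu}X_{2(\nu+j)}=d(P_2)$, so $L_{i+1}(X_2)=v_{i+1}d(P_2)$.

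Combining these, $L_{i+1}(\alpha)\leq v_{i+1}d(P_2)$ for all $\alpha\in C_i$, which shows $C_i$ is contained in one of the closed half-spaces determined by $H_{i+1}$. The intersection $C_i\cap H_{i+1}$ is nonempty (take $\alpha\in\cvx{\maf{A}_{1(i+1)}}\subseteq\cvx{\maf{A}_{1i}}$, regarded as the $t=1$ point of $C_i$), so it is a face of $C_i$. Finally, equality $L_{i+1}(\alpha)=v_{i+1}d(P_2)$ forces either $t=0$, in which case $\alpha=X_2\in\cvx{\maf{A}_2}\subset C_{i+1}$, or $X_1\in\cvx{\maf{A}_{1(i+1)}}$, in which case $\alpha\in C_{i+1}$ directly; the reverse inclusion $C_{i+1}\subseteq C_i\cap H_{i+1}$ is immediate from $\maf{A}_{1(i+1)}\subseteq\maf{A}_{1i}$ and the same computation of $L_{i+1}$. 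Hence $C_i\cap H_{i+1}=C_{i+1}$.

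There is essentially no obstacle here beyond bookkeeping: the argument is a verbatim adaptation of Lemma~\ref{L:1stface}. The only point worth flagging in writing is the degenerate case $t=0$ (and symmetrically $t=1$), where one must check that $\alpha$ still belongs to $C_{i+1}$ even though the ``$X_1\in\cvx{\maf{A}_{1(i+1)}}$'' conclusion from the equality case is not forced; this follows immediately from the convention that $C_{i+1}$ includes the full segment parameterisation with $t\in[0,1]$, so $\cvx{\maf{A}_2}\subseteq C_{i+1}$.
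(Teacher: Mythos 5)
Your proof is correct and follows the same approach as the paper: introduce the affine functional $L_{i+1}$, use linearity on $\alpha = tX_1 + (1-t)X_2$, show $L_{i+1}(X_1) \leq v_{i+1}d(P_2)$ with equality iff $X_1\in\cvx{\maf{A}_{1(i+1)}}$, and note $L_{i+1}(X_2) = v_{i+1}d(P_2)$. The paper compresses the final step by referring back to Lemma~\ref{L:1stface}; your explicit treatment of the $t=0$ degenerate case, where equality holds without forcing $X_1\in\cvx{\maf{A}_{1(i+1)}}$, is a small but genuine improvement in rigor over the paper's telegraphic "similar arguments" clause, though the conclusion is unaffected since $\cvx{\maf{A}_2}\subseteq C_{i+1}$.
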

\begin{proof}
	For each $i$, let us set $L_{i+1}(x)=d(P_2)x_{i+1}+\big(\sum_{1}^{n-\nu}x_{\nu+j}\big)v_{i+1}$.
	Let $\alpha\in C_i$ then $\alpha=tX_1+(1-t)X_2$ for some $t\in[0,1]$ and $X_1\in\cvx{\maf{A}_{1i}}, X_2\in \cvx{\maf{A}_2}$. Hence
	$L_{i+1}(\alpha)=tL_{i+1}(X_1)+(1-t)L_{i+1}(X_2)$ for every $i$.
	\smallskip
	
	Notice $L_{i+1}(X_1)=d(P_2)X_{1(i+1)}$. Since $X_1\in\cvx{\maf{A}_{1i}}$, we see that $X_{1(i+1)}\leq v_{i+1}$ with equality if and only if $X_1\in\cvx
	{\maf{A}_{1(i+1)}}$. On the other hand $L_{i+1}(X_2)=v_{i+1}d(P_2)$. The lemma now follows from similar arguments as in the last paragraph of the proof of the previous lemma.
\end{proof}

We are now ready to present the proof of our main theorem.

\subsection{Proof of Theorem~\ref{T:irredCritdecpoly}}
Observe that without loss of generality we can assume that 
$P_j(0)=0$, $j=1,2$. We notice that, since $\maf{A}_{1\nu}$ is a singleton set, $C_\nu$ is a prism with its distinguished vertex
being the unique element of $\maf{A}_{1\nu}$ and its base $\cvx{\maf{A}_2}$. From Lemma~\ref{L:subseqface},
we see that $C_\nu$ is a face of $C_{\nu-1}$. Applying Lemma~\ref{L:subseqface} and Result~\ref{Res:transf} 
iteratively we get that $C_\nu$ is a face of $C_1$
which, by Lemma~\ref{L:1stface}, is a face of $C_0$. Proposition~\ref{P:cyldrface} says that $C_0$
is a face of $\nwtp{P}$. Again applying Result~\ref{Res:transf}, we get that $C_\nu$ is a face of $N(P)$.
\smallskip

Let us denote by $X_0$ the unique element of $\maf{A}_{1\nu}$ and let $X_1$
be any vertex of $\maf{A}_2$. Then the segment $\big\{tX_0+(1-t)X_1:t\in[0,1]\big\}$
is an edge of the prism $C_\nu$. Since $C_\nu$ is a face of $\nwtp{P}$, using
Result~\ref{Res:transf} again, we get that the segment is also an edge of the polytope $\nwtp{P}$.
\smallskip

\noindent{\bf Claim.} The edge $\big\{tX_0+(1-t)X_1:t\in[0,1]\big\}$ is integrally indecomposable.
\smallskip

\noindent To see this, suppose the edge is not integrally indecomposable. Then,
by Result~\ref{Res:indprism}, we get that $\gcd({X_0-X_1})=\gcd(X_{01},\ldots,
X_{0\nu},-X_{1(\nu+1)},\ldots,-X_{1n})=r\neq 1$. This implies 
$r$ divides 
$\sum_{i=1}^{\nu}X_{0i}$ and $r$ divides
$\sum_{j=1}^{n-\nu},X_{i(\nu+j)}$. Since $\sum_{i=1}^{\nu}X_{0i}=d(P_1)$
and $\sum_{j=1}^{n-\nu}X_{i(\nu+j)}=d(P_2)$, we have $r$ divides $\gcd(d(P_1),d(P_2))$.
This gives a contradiction. Hence the edge is integrally indecomposable.
\smallskip

Now we claim that $\nwtp{P}$ is integrally indecomposable. This is because if it is not so then using
Result~\ref{Res:minkdecface} we see that the edge as described above will be integrally decomposable. Since none of the 
$z_i$'s divide $P$, from the `Irreducibility criterion' $P$ is irreducible. \qed

\subsection{Examples}\label{E:iredpoly} In this subsection,
we present a family of irreducible polynomials using the sufficient condition 
given in this article.
\smallskip
 
\noindent{\bf Example 1.}
	Let $P_1\in F[z_1,z_2]$ be of the form 
	\[
	az_1^n+bz_2^m + cz_1^uz_2^v + \sum c_{ij}z_1^iz_2^j
	\]
	where $a, b, c, n, m \neq 0$ and $P_2\in F[z_3,\ldots,z_n]$ be any nonconstant polynomial.
	Suppose that $un+mv\neq mn$ and $u+v>\max\{m,\,n\}$ and $u+v>\max\{(i+j):c_{ij}\neq 0\}$.
	Notice that $(u,v)$ will be on the positive 
	side of the line passing through $(n,0)$ and $(0,m)$, i.e., $mu + nv > mn$.
	To see this, suppose first that $n\geq m$. We know $u + v > n$.
	So,
	\[
	mu + nv \geq m(u+v) > mn.
	\]
	The other case could also be verified easily.
	\smallskip
	
	\noindent Suppose now for each $(i,j)$ for which $c_{ij}\neq 0$,
	we have $mi+nj\geq mn$, $vi-(u-n)j \leq vn$, and $uj-(v-m)i \leq mu$.
	Then $\nwtp{P_1}$ will be a triangle with vertices $(n,0), (m,0)$ and $(u,v)$. 
	By Result~\ref{Res:indprism} and Proposition \ref{P:anocritirred}, 
	we see that $P=P_1+P_2$ is irreducible if $\gcd(m,n,u,v)=1$.
	\smallskip
	
	\noindent{\bf Example~2.}
	Suppose we can rearrange the terms of $P\in F[z_1, \ldots, z_n]$ such that we can
	write $P=P_1 + P_2$ where $P_1$ can be written in the form $P_1=Q_1 + Q_2$ 
	where $Q_1\in F[z_1]$ is a nonconstant polynomial of degree $r$ 
	and $Q_2\in F[z_2, \ldots, z_\nu]$ is a nonconstant polynomial of degree $m$ such that $P_1(0)=0$.
	Observe that if $\gcd(r,m)=1$ then the method of the proof
	of Theorem~\ref{T:irredCritdecpoly} implies that $\nwtp{P_1}$ is integrally
	indecomposable polytope. Hence by Proposition~\ref{P:anocritirred}, $P$ will 
	be irreducible for any nonconstant $P_2\in F[z_{\nu+1},\ldots,z_n]$.

\section*{Acknowledgements}
A part of this work was carried out at the Indian Institute of Science (IISc), Bangalore where the
first author was a research associate. He wishes to thank his thesis adviser Prof.\,Gautam Bharali
for supporting him at this position under his Swarnajayanti
Fellowship (Grant No.~DST/SJF/MSA-02/2013-14). 
\medskip

\end{document}